\newtheorem{proposition}{Proposition}
\newtheorem{lemma}{Lemma}
\newtheorem{theorem}{Theorem}
\newcounter{liczprz}
\newenvironment{example}{\refstepcounter{liczprz}{\textbf{Example~\theliczprz}}}{\(\lozenge\)\newline}
\begin{document}
\author{Krzysztof Le\'{s}niak}
\title{Random iteration and projection method}
 
\maketitle 

\section{Introduction}

Let \(X\) be a complete metric space with metric denoted by \(d\).
Albeit such abstract scenery, 
best motivated examples come from and lie in Euclidean spaces 
with a possible prospect for Hilbert and Banach spaces.

Recall from the fractal geometry that a system 
$\Phi = (X;f_i,i=1,{\ldots},N)$ of maps $f_i:X{\to} X$ is called an 
\textbf{iterated function system}, shortly IFS (\cite{Edgar, FractalsEver}). 
We assume that the maps $f_i$ are nonexpansive:
\[
\forall_{x_{1},x_{2}{\in} X}\; d(f_i(x_1),f_i(x_2)) \leq d(x_1,x_2).
\]
This allows for situations where neither a strict attractor 
(\cite{BarnsleyVinceProjective}) nor a Lasota-Myjak semiattractor 
(\cite{LasotaMyjak}) exists, despite the fact that such attractors 
may be present in permanently 
noncontractive cases (cf. \cite{BarnsleyVinceChaos}). However the nature of 
basic examples which took our attention and sparkled this research justifies 
the assumptions we make here and add further. 
To avoid a mystery: we are interested in invariant sets rather than attractors
of IFSs. 
Concerning the existence of invariant sets one can assure it under 
very mild dissipativity conditions even in the absence of continuity of actions,
see e.g. the references in \cite{LesniakCEJM} or \cite{Kieninger}. 
So let us refine our goal. 

The study of systems of contractions (and relatives) became a standard topic 
of several books, e.g., \cite{FractalsEver, Edgar, MauldinUrbanskiGraph, 
Massopust, LaTorre} to mention only a small portion of literature.
It is known also quite a lot about the dynamics of a single nonexpansive map 
and omega-limit sets, see \cite{DafermosSlemrod, RoehrigSine, 
AkcogluKrengel, GoebelKirk, Nussbaum, Sine, DiLena}. 
However it is still  visibly less known about iterated systems 
of nonexpansive maps.
Frankly the idea of a nonexpansive IFS appears in different disguise shortly 
before 1939 in the context of linear algebra and functional analysis. This is 
the method of projections (introduced by von Neumann and Kaczmarz).
This topic also received a great attention 
(\cite{BauschkeThesis, BauschkeBorwein, Galantai, 
AlternatingProjection, BauschkeReich, BauschkeDeutsch}), 
again, as with the fractal geometry, due to its applicability potential. 
Yet for the full understanding and synthesis of the projection method some 
gaps await its explanation. One, towards which we aspire, reads as follows: 
\emph{what happens when we use projections onto sets with 
empty intersection?}
In the case of two sets one easily finds out that the iteration recovers 
the two nearest points in the sets realizing the (infimum) distance between 
these sets (\cite{BauschkeThesis} 11.4.3). Our goal is to show, under very weak
contractivity conditions (allowing for orthogonal projections among others),
a general principle that the 
\emph{iterated maps recover a minimal invariant set} in the sense that 
the omega-limit set of the generated orbit constitutes an invariant set.
(Note that for $N=1$, a single map, this is an elementary exercise).
Related results together with a panorama of examples can be found in
\cite{Angelos}.

Given an IFS $(X;f_i,i=1,{\ldots},N)$ we define the 
\textbf{Hutchinson operator} 
$\Phi: 2^{X}\setminus\{\emptyset\} {\to} 2^{X}\setminus\{\emptyset\}$ 
via
\[
\forall_{S{\in}2^{X}\setminus\{\emptyset\}}\; 
\Phi(S) := \bigcup_{i=1}^{N} f_{i}(S),
\]
where $2^{X}\setminus\{\emptyset\}$ stands for the family of nonempty 
subsets of $X$. (Note that we do not take the closure of the union in 
this variant of the definition).

A nonempty $S\subset X$ is said to be an \textbf{invariant set} for 
the system of maps $\{f_{1},{\ldots},f_{N}\}$ when $\Phi(S)=S$,
and a \textbf{subinvariant set} when $\Phi(S){\subset} S$. 
Traditional dynamics set focus on compact invariant sets 
(see however \cite{MauldinUrbanskiGraph}
for a reasonable deviation from this rule). This is the case here too,
but keeping a general definition will prove handy in the next Section.

By an \textbf{orbit} $(x_{n})_{n=0}^{\infty}$  starting at $x_{0}\in X$ 
with a driving sequence of symbols 
$(i_{n})_{n=1}^{\infty} \in\{1,\ldots,N\}^{\infty}$ 
we understand
\[
x_{n} := f_{i_n}{\circ}{\ldots}{\circ} f_{i_1}(x_0).
\]
Such iterations are fundamental for some numerical methods in
fractal geometry (chaos game algorithm \cite{FractalsEver, LaTorre})
and convex geometry (cyclic projection algorithm 
\cite{BauschkeBorwein, AlternatingProjection}).  
The driving sequence may be called a driver (\cite{McFarlaneHoggar}) 
or a control sequence (\cite{BauschkeThesis}); by a slight abuse, 
against the direction of compositions of maps, a code or an address 
(\cite{FractalsEver, Kieninger}) might be accepted too.
If the process generating symbols is stochastic, then the terms `random 
driver' and `random orbit' are justified. However random driver can mean 
a sequence where each symbol repeats infinitely often 
(\cite{BauschkeThesis}; repetitive below)
and `random orbit' can stand for an orbit driven by sufficiently complex 
deterministic sequence of symbols; with this respect also `chaotic orbit'
is in use (\cite{FractalsEver, BarnsleyLesniak}).  

We list below various types of drivers 
(\cite{BauschkeThesis, BarnsleyLesniak, CaludeStaiger}). A sequence 
$(i_{n})_{n=1}^{\infty} \in\{1,\ldots,N\}^{\infty}$
is called
\begin{itemize}
\item \textbf{cyclic}, if $i_{n}=\pi((n-1) \mod N + 1)$, for
all $n\geq 1$ under some fixed 
permutation $\pi$ of $\{1,{\ldots},N\}$,
\item \textbf{repetitive}, if for every $\sigma\in\{1,{\ldots},N\}$
the set $\{n\geq 1: i_{n}=\sigma\}$ is infinite,
\item \textbf{disjunctive}, if it contains avery possible finite word as 
its subword, namely for all $m\geq 1$ and every word 
$(\sigma_{1},{\ldots},\sigma_{m})\in\{1,{\ldots},N\}^{m}$
there exists $n_{0}\geq 1$ s.t. $i_{n_{0}-1\, +j}= \sigma_{j}$ for 
$j= 1,{\ldots},m$.
\end{itemize}
A disjunctive driver and a cyclic driver are necessarily repetitive 
but the reverse implications are obviously false. 
Also neither a cyclic sequence is disjunctive nor vice-versa.

Further we shall employ a disjunctive driver in the main theorem.
To understand why this feature fits well a standard cyclic projection 
onto two sets, one should recognize that the 
(linear or metric nearest point) projection map $P$ is idempotent, 
$P{\circ} P= P$ (a retraction put in a nonlinear topology framework).
The cancellations in any orbit build from $N=2$ projections show 
that the result is similar (modulo repetitions) to an 
alternating projection orbit regardless of how complex disjunctive 
driver was applied, see Examples \ref{ex:KaczmarzLines} and
\ref{ex:ParaLines}. 

We define the \textbf{omega-limit set} of $(x_{n})_{n=0}^{\infty}$ 
in the usual way by a descending intersection of the closures 
of tails of an orbit
\[
\omega((x_{n})) := \bigcap_{m=0}^{\infty} 
\overline{\{x_{n}: n{\geq} m \}}.
\]
One should be aware that in the case of IFSs and multivalued 
dynamical systems various kinds of omega-limit sets can be defined,
consult e.g. \cite{McGehee, Akin, Kieninger, Potzsche}. 
Comparing the nonautonomous discrete dynamical systems 
(\cite{nonautonomouSystems}) with the framework of IFSs 
(and also multivalued systems, cf. \cite{Igudesman, LasotaMyjak}), 
one sees that the orbit of the nonautonomous system is determined
by the starting point though the dynamics changes over time
and to determine the orbit of the IFS one needs additionally to
specify the driving sequence; loosely speaking in the theory
of IFSs we deal with the infinite number of nonautonomous  
systems upon a finite (sometimes countable 
\cite{MauldinUrbanskiGraph} or compact 
\cite{Wicks, Kieninger, LesniakCEJM}) set of generating maps.
Yet one can cast the IFS as a skew-product system,
e.g., \cite[Example 4.3]{Potzsche}. 

We finish this Section by giving motivating examples 
where the projection method invites a 
nonexpansive IFSs viewpoint (cf. \cite{Angelos}).
Specifically Example \ref{ex:BarnsleyTriangle}
suggests the way we should interpret the result of the
projection algorithm in general. We use a common notation
$H_{i}\subset X := {\mathbb{R}}^{2}$ for lines 
(hyperplanes) and $P_{i}:X \to H_{i}$ for orthogonal 
projections onto $H_{i}$ constituting a nonexpansive 
IFS $(X; f_{1},f_{2},{\ldots})$, $f_{i} := P_{i}$.

\begin{example}\label{ex:KaczmarzLines}
Given two lines intersecting at $x_{*}$ one projects alternately 
onto them to recover solution $x_{*}$ of the linear system.
The picture in Figure \ref{fig:KaczmarzLines} is the hallmark
of the projection method. We have the orbit
$P_{2}{\circ}P_{1}{\circ}P_{2}{\circ}P_{1}{\circ}P_{2}(x_0)$
converging to $x_{*}$. Note that the composition 
$P_{1}{\circ}P_{2}$ is contractive.

\begin{figure}
\caption{Alternating projections onto two lines.}
\label{fig:KaczmarzLines}
\begin{tikzpicture}[line cap=round,line join=round,>=triangle 45,x=1.0cm,y=1.0cm]
\clip(-0.24,2.61) rectangle (4.76,6.99);
\draw [domain=-0.24:4.76] plot(\x,{(--0.22-0.49*\x)/-0.26});
\draw [domain=-0.24:4.76] plot(\x,{(--2.28-0.51*\x)/0.31});
\draw [line width=1.2pt,dash pattern=on 2pt off 2pt] (0.29,5.68)-- (0.85,6.02);
\draw [line width=1.2pt,dash pattern=on 2pt off 2pt] (0.85,6.02)-- (3.03,4.87);
\draw [line width=1.2pt,dash pattern=on 2pt off 2pt] (3.03,4.87)-- (1.93,4.21);
\draw [line width=1.2pt,dash pattern=on 2pt off 2pt] (1.93,4.21)-- (2.52,3.9);
\draw [line width=1.2pt,dash pattern=on 2pt off 2pt] (2.22,3.73)-- (2.52,3.9);
\begin{scriptsize}
\fill [color=black] (2.33,3.54) circle (2.0pt);
\draw[color=black] (2.46,3.02) node {$x_{*}$};
\draw[color=black] (3.65,6.58) node {$H_1$};
\draw[color=black] (0.88,6.63) node {$H_2$};
\fill [color=black] (0.29,5.68) circle (1.5pt);
\draw[color=black] (0.16,5.42) node {$x_0$};
\fill [color=black] (0.85,6.02) circle (1.5pt);
\draw[color=black] (1.3,6.02) node {$x_1$};
\fill [color=black] (3.03,4.87) circle (1.5pt);
\draw[color=black] (3.38,4.77) node {$x_2$};
\fill [color=black] (1.93,4.21) circle (1.5pt);
\draw[color=black] (1.57,4.22) node {$x_3$};
\fill [color=black] (2.52,3.9) circle (1.5pt);
\draw[color=black] (2.8,3.83) node {$x_4$};
\fill [color=black] (2.22,3.73) circle (1.5pt);
\draw[color=black] (1.8,3.6) node {$x_5$};
\end{scriptsize}
\end{tikzpicture}
\end{figure}
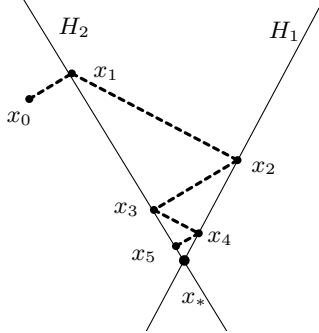
\end{example}

\begin{example}\label{ex:ParaLines}
Given two parallel lines one projects alternately 
onto them to get a pair of minimally distanced points.
The visualization provides Figure \ref{fig:ParaLines}.
Here $P_{1}{\circ} P_{2}$ is not contractive,
yet it behaves contractively on the orbit, 
see the last Section for precise formulation of this 
phenomenon.
Usually the case of parallel lines exhibits instability upon 
parameters for the solution problem of linear systems.

\begin{figure}
\caption{Alternating projections onto two parallel lines.}
\label{fig:ParaLines}
\begin{tikzpicture}[line cap=round,line join=round,>=triangle 45,x=1.0cm,y=1.0cm]
\clip(1.34,4.48) rectangle (4.44,7.19);
\draw [domain=1.34:4.44] plot(\x,{(--0.22-0.49*\x)/-0.26});
\draw [domain=1.34:4.44] plot(\x,{(-0.67-0.49*\x)/-0.26});
\draw [line width=1.2pt,dash pattern=on 2pt off 2pt] (1.97,6.3)-- (2.86,5.83);
\draw [line width=1.2pt,dash pattern=on 2pt off 2pt] (2.86,5.83)-- (3.39,5.55);
\begin{scriptsize}
\draw[color=black] (3.56,6.83) node {$H_1$};
\fill [color=black] (2.86,5.83) circle (1.5pt);
\draw[color=black] (2.56,5.56) node {$x_0$};
\draw[color=black] (2.56,6.74) node {$H_2$};
\fill [color=black] (1.97,6.3) circle (2.0pt);
\draw[color=black] (2.91,6.28) node {$x_2 = x_4$};
\fill [color=black] (3.39,5.55) circle (2.0pt);
\draw[color=black] (3.95,5.14) node {$x_1=x_3$};
\end{scriptsize}
\end{tikzpicture}
\end{figure}
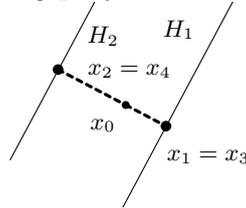
\end{example}

\begin{example}\label{ex:Square}
Suppose we have the following configuration: 
four lines so that each one is orthogonal to two others 
and parallel to the third one. Points of intersection,
denoted $y_{12}, y_{13}, y_{23}, y_{24}$, span a rectangle,
see Figure \ref{fig:Square}.
Then projecting onto these lines in a sufficiently ``random" manner,
e.g., $P_{2}{\circ}P_{3}{\circ}P_{1}{\circ}P_{4}
{\circ}P_{2}{\circ}P_{1}{\circ}P_{2}{\circ}P_{1}(x_0) $,
one quickly recovers the four corner points 
$C:=\{y_{12}, y_{13}, y_{23}, y_{24}\}$.  
Note that analogously to Example \ref{ex:KaczmarzLines} 
the composition $P_{3}{\circ} P_{1}$ 
($H_1$ and $H_3$ orthogonal) is contractive;
hence $P_{4}{\circ} P_{2}{\circ} P_{3}{\circ} P_{1}$
is contractive.
The minimal closed set invariant on the joint action
of all projections $P_{1},P_{2},P_{3},P_{4}$ 
is exactly the set $C$; see the next Section for the precise 
definition of an invariant set.

\begin{figure}
\caption{Projections onto four pairwise orthogonal or parallel lines.}
\label{fig:Square}
\begin{tikzpicture}[line cap=round,line join=round,>=triangle 45,x=1.0cm,y=1.0cm]
\clip(-0.12,3.37) rectangle (6.01,8.45);
\draw (4.25,3.37) -- (4.25,8.45);
\draw (2.01,3.37) -- (2.01,8.45);
\draw [line width=1.2pt,dash pattern=on 2pt off 2pt] (2.01,5.52)-- (1.26,5.52);
\draw [line width=1.2pt,dash pattern=on 2pt off 2pt] (1.26,5.52)-- (4.25,5.52);
\draw [domain=-0.12:6.01] plot(\x,{(--14.92-0*\x)/2.24});
\draw [domain=-0.12:6.01] plot(\x,{(--9.89-0*\x)/2.24});
\begin{scriptsize}
\draw[color=black] (3.95,7.6) node {$H_1$};
\fill [color=black] (1.26,5.52) circle (1.5pt);
\draw[color=black] (1.26,5.19) node {$x_0$};
\draw[color=black] (2.3,7.59) node {$H_2$};
\fill [color=black] (2.01,5.52) circle (1.5pt);
\draw[color=black] (2.75,5.13) node {$x_2 = x_4$};
\fill [color=black] (4.25,5.52) circle (1.5pt);
\draw[color=black] (5.0,5.41) node {$x_1=x_3$};
\fill [color=black] (4.25,6.66) circle (2.5pt);
\draw[color=black] (5.0,6.34) node {$y_{13}=x_7$};
\fill [color=black] (2.01,6.66) circle (2.5pt);
\draw[color=black] (2.75,6.25) node {$y_{23} = x_8$};
\fill [color=black] (2.01,4.41) circle (2.5pt);
\draw[color=black] (2.75,4.04) node {$y_{24} = x_5$};
\fill [color=black] (4.25,4.41) circle (2.5pt);
\draw[color=black] (5.1,4.03) node {$y_{12}=x_6$};
\draw[color=black] (0.86,6.99) node {$H_3$};
\draw[color=black] (0.72,4) node {$H_4$};
\end{scriptsize}
\end{tikzpicture}
\end{figure}
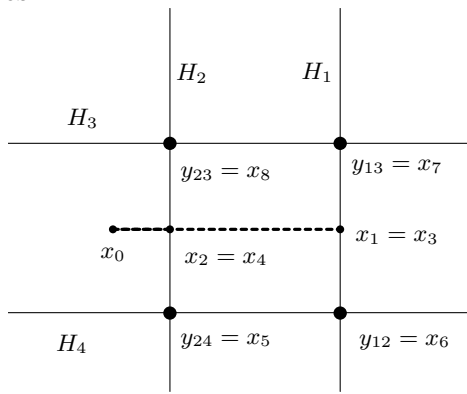
\end{example}

Since the orthogonal projection onto a hyperplane is 
nonexpansive w.r.t. the taxi-cab ${\ell}^1$-norm 
in the Euclidean space one might hope (in accordance 
with the Examples so far) that omega-limit sets
of systems consisting of orthogonal projections are finite
sets (\cite{AkcogluKrengel, Nussbaum, DiLena}).
This is not true as shown below.

\begin{example}\label{ex:BarnsleyTriangle}
If we project onto three lines each two of which are intersecting
and the choice of projections follows a disjunctive driver,
then the omega-limit set of such iteration 
constitutes a triangle with vertices at intersection points.
This phenomenon was observed quite long time ago
for drivers generated via discrete stochastic processes 
(`chaos game algorithm' \cite{BarnsleyPrivate}). 

We sketch the orbit 
$P_{1}{\circ}P_{3}{\circ}P_{1}{\circ}P_{2}
{\circ}P_{3}{\circ}P_{1}{\circ}P_{2}(x_0) $
in Figure~\ref{fig:BarnsleyTriangle}. 
Note that any two projections $P_{i}{\circ} P_{j}$, 
$i{\neq}j$, compose to contractions; in particular
$P_{3}{\circ}P_{2}{\circ}P_{1}$ is a contraction.

\begin{figure}
\caption{Randomly applied projections onto three lines.}
\label{fig:BarnsleyTriangle}
\begin{tikzpicture}[line cap=round,line join=round,>=triangle 45,x=1cm,y=1cm]
\clip(0.37,1.14) rectangle (4.79,5.01);
\draw [line width=3.2pt] (2.3,3.54)-- (4.12,1.57);
\draw [line width=3.2pt] (4.12,1.57)-- (1.41,1.8);
\draw [line width=3.2pt] (1.41,1.8)-- (2.3,3.54);
\draw [domain=0.37:4.79] plot(\x,{(--0.85-1.75*\x)/-0.9});
\draw [domain=0.37:4.79] plot(\x,{(--10.98-1.98*\x)/1.81});
\draw [domain=0.37:4.79] plot(\x,{(--5.19-0.23*\x)/2.71});
\draw [line width=1.2pt,dash pattern=on 2pt off 2pt] (2.25,4.57)-- (1.77,4.13);
\draw [line width=1.2pt,dash pattern=on 2pt off 2pt] (1.77,4.13)-- (2.43,3.79);
\draw [line width=1.2pt,dash pattern=on 2pt off 2pt] (2.43,3.79)-- (2.25,1.73);
\draw [line width=1.2pt,dash pattern=on 2pt off 2pt] (2.25,1.73)-- (3.19,2.58);
\draw [line width=1.2pt,dash pattern=on 2pt off 2pt] (3.19,2.58)-- (2.1,3.14);
\draw [line width=1.2pt,dash pattern=on 2pt off 2pt] (2.1,3.14)-- (1.98,1.75);
\draw [line width=1.2pt,dash pattern=on 2pt off 2pt] (1.98,1.75)-- (1.51,1.99);
\begin{scriptsize}
\draw[color=black] (3.1,4.44) node {$H_1$};
\draw[color=black] (1.1,4.37) node {$H_2$};
\draw[color=black] (0.9,2.1) node {$H_3$};
\fill [color=black] (2.25,4.57) circle (1.5pt);
\draw[color=black] (2.4,4.79) node {$x_0$};
\fill [color=black] (1.77,4.13) circle (1.5pt);
\draw[color=black] (1.56,3.99) node {$x_1$};
\fill [color=black] (2.43,3.79) circle (1.5pt);
\draw[color=black] (2.86,3.7) node {$x_2$};
\fill [color=black] (2.25,1.73) circle (1.5pt);
\draw[color=black] (2.3,1.48) node {$x_3$};
\fill [color=black] (3.19,2.58) circle (1.5pt);
\draw[color=black] (3.49,2.64) node {$x_4$};
\fill [color=black] (2.1,3.14) circle (1.5pt);
\draw[color=black] (1.78,3.21) node {$x_5$};
\fill [color=black] (1.98,1.75) circle (1.5pt);
\draw[color=black] (1.95,1.5) node {$x_6$};
\fill [color=black] (1.51,1.99) circle (1.5pt);
\draw[color=black] (1.34,2.16) node {$x_7$};
\end{scriptsize}
\end{tikzpicture}
\end{figure}
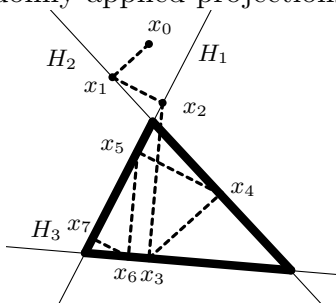
\end{example}
 
The reader is encouraged to dwelve in \cite{Angelos} for more examples 
with detailed analyses of polygonal omega-limit sets.

\section{Generalities}

We shall present here a general relationship between invariant sets and 
omega-limit sets. Throughout let $C\subset X$ denote a nonempty
closed bounded subinvariant set of the nonexpansive IFS
$(X;f_i,i=1,{\ldots},N)$, $\Phi$ the Hutchinson operator and 
$(x_{n})_{n=0}^{\infty}$ the orbit.

Given a nonempty set $S\subset X$, we employ also 
the notation:
\begin{itemize}
\item $d(p,S) := \inf_{s\in S} d(p,s)$ for the distance from the point 
$p\in X$ to $S$;
\item $N_{\varepsilon}S := \{x\in X: d(x,S) < \varepsilon\}$ for 
the $\varepsilon$-neighbourhood of $S$.
\end{itemize}

It turns out to be convenient to use a known 
sequential characterization of the omega-limit set:
\begin{equation*}
x_{*}\in\omega((x_{n})) \;\text{iff}\; x_{k_n}\to x_{*}
\;\text{for some subsequence}\; k_{n}\nearrow\infty.
\end{equation*}
Whenever we make statements about omega-limit sets we 
\emph{assume} that they are nonempty; for empty
omega-limit sets the statements are void. 
In this respect we have the following basic criterion.
\begin{lemma}[\cite{McGehee, Akin, nonautonomouSystems}]
If the orbit $(x_{n})_{n=0}^{\infty}$ is precompact, then
$\omega((x_{n}))$ is nonempty and compact. 
\end{lemma}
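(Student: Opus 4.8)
The plan is to treat this as a purely topological fact, combining the Cantor intersection property for nested compact sets with the sequential characterization of $\omega$ recalled just above. I would introduce the abbreviation $T_m := \overline{\{x_n : n \geq m\}}$ for the closure of the $m$-th tail, so that by definition $\omega((x_n)) = \bigcap_{m=0}^{\infty} T_m$, and note that the $T_m$ form a descending chain $T_0 \supseteq T_1 \supseteq \cdots$ of nonempty closed sets.

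For nonemptiness I would use precompactness directly. The hypothesis says $T_0$ is compact; hence the sequence $(x_n)$ admits a convergent subsequence $x_{k_n} \to x_{*}$ with $k_n \nearrow \infty$. By the sequential characterization this limit belongs to $\omega((x_n))$, which is therefore nonempty. Equivalently, each $T_m$ is a closed subset of the compact set $T_0$, hence itself compact, and Cantor's intersection theorem applied to the descending chain of nonempty compacta yields $\bigcap_{m} T_m \neq \emptyset$.

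For compactness, I would observe that $\omega((x_n))$ is an intersection of closed sets, hence closed, and it is contained in the compact set $T_0$; a closed subset of a compact set is compact, so the conclusion follows. I do not anticipate any genuine obstacle: the only nontrivial ingredient is the precompactness hypothesis, which simultaneously supplies the convergent subsequence and the ambient compact set $T_0$. The lone point meriting a line of care is matching the intersection definition of $\omega$ with its sequential description, but this equivalence is immediate---a point lies in every tail closure $T_m$ precisely when it is a subsequential limit of the orbit.
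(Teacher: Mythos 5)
Your proof is correct. Note that the paper itself gives no proof of this lemma---it is stated as a known result imported from the cited references---so there is no in-paper argument to compare against; your argument is the standard one. Both halves check out: precompactness makes the tail closures $T_m$ a descending chain of nonempty compacta, so Cantor's intersection theorem (or, equivalently, extraction of a convergent subsequence combined with the sequential characterization of $\omega((x_n))$ that the paper records just before the lemma) gives nonemptiness, and $\omega((x_n))$ is a closed subset of the compact set $T_0$, hence compact.
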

Recall that the orbit $\{x_{n}\}_{n=0}^{\infty}$ in a complete 
space $X$ is precompact if it has compact closure.

The simple observations below are crucial for 
basic relationship between invariant sets and omega-limit sets. 
\begin{lemma}\label{monotonedist}
Let $(x_{n})_{n=0}^{\infty}$ be the orbit and suppose that there
exists a nonempty closed bounded set $C$ which is subinvariant,
$\Phi(C)\subset C$. Then
\begin{enumerate}
\item[(i)] $d(x_{n+1},C)\leq d(x_{n},C)\leq d(x_{0},C)$,
\item[(ii)] $(x_{n})_{n=0}^{\infty}$ is bounded,
\item[(iii)] $d(y,C) = \inf_{n} d(x_{n},C) \equiv \text{const}$ 
for $y\in\omega((x_{n}))$.
\end{enumerate}
\end{lemma}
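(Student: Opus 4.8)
The plan is to prove the three claims in sequence, where (i) is the engine that drives everything and (ii), (iii) are essentially corollaries.

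For part (i), the key observation is that each generating map $f_i$ is nonexpansive and $C$ is subinvariant. I would first show the pointwise estimate $d(f_i(x),C)\le d(x,C)$ for every $x\in X$ and every $i$. To see this, pick any $c\in C$; then $d(f_i(x),f_i(c))\le d(x,c)$ by nonexpansiveness, and since $f_i(c)\in f_i(C)\subset\Phi(C)\subset C$, the point $f_i(c)$ is a competitor in the infimum defining $d(f_i(x),C)$, so $d(f_i(x),C)\le d(f_i(x),f_i(c))\le d(x,c)$. Taking the infimum over $c\in C$ on the right yields $d(f_i(x),C)\le d(x,C)$. Applying this with $x=x_n$ and $i=i_{n+1}$ gives $d(x_{n+1},C)=d(f_{i_{n+1}}(x_n),C)\le d(x_n,C)$, so the sequence $(d(x_n,C))_n$ is nonincreasing; iterating back to $n=0$ gives the full chain $d(x_{n+1},C)\le d(x_n,C)\le d(x_0,C)$.

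For part (ii), boundedness of the orbit follows immediately: since $C$ is bounded, fix a point $c_0\in C$ and let $R=\sup_{c\in C}d(c_0,c)<\infty$. By (i), $d(x_n,C)\le d(x_0,C)$ for all $n$, so each $x_n$ lies within distance $d(x_0,C)+R+\varepsilon$ of $c_0$ by the triangle inequality (choosing a near-optimal $c\in C$ realizing $d(x_n,C)$ up to $\varepsilon$), hence the orbit sits in a bounded ball about $c_0$.

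For part (iii), I would argue that the monotone nonincreasing sequence $d(x_n,C)$ converges to its infimum $L:=\inf_n d(x_n,C)$, and then show that every $y\in\omega((x_n))$ satisfies $d(y,C)=L$. Take $y\in\omega((x_n))$, so $x_{k_n}\to y$ for some subsequence. Since $x\mapsto d(x,C)$ is $1$-Lipschitz (a standard consequence of the triangle inequality), continuity gives $d(x_{k_n},C)\to d(y,C)$; but $d(x_{k_n},C)$ is a subsequence of the convergent sequence $d(x_n,C)\to L$, so $d(y,C)=L$. The mild obstacle here is purely notational care with infima and the $\varepsilon$-slack in part (ii); the genuine content is the nonexpansiveness-plus-subinvariance computation in (i), and once that is in hand the rest is the standard Lipschitz continuity of the distance function together with monotone convergence of a bounded sequence. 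I expect no serious difficulty, only the need to handle the infimum cleanly when $C$ is closed but not assumed compact.
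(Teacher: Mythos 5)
Your proof is correct and follows essentially the same route as the paper: nonexpansiveness plus subinvariance gives monotonicity of $d(x_n,C)$, boundedness of the orbit follows from boundedness of $C$, and (iii) follows from monotone convergence together with the $1$-Lipschitz continuity of $x\mapsto d(x,C)$. The only cosmetic difference is in (i), where you take the infimum over all $c\in C$ after a per-point estimate, while the paper fixes an $\varepsilon$-almost-minimizing $c_0$; both handle the possibly unattained infimum equally well.
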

\begin{proof}
Fix ${\varepsilon} >0$. Find appropriate $c_0\in C$ to write
\begin{gather*}
d(x_0,C)+\varepsilon \geq d(x_0,c_0) \geq
d(f_{i_1}(x_0),f_{i_1}(c_0)) = \\
= d(x_{1},f_{i_1}(c_0)) \geq d(x_{1},C); 
\end{gather*}
the last inequality relies on $f_{i_1}(c_0)\in \Phi(C)\subset C$.
By induction (i) follows. 

Moreover $x_{n}\in N_{2d(x_{0},C)}C$ for $n\geq 1$, where 
the latter set is bounded. Hence (ii) follows.

The sequence $d(x_{n},C)$ is monotonely decreasing and 
bounded from below by $0$, thus it is convergent to
$\text{const} = \inf_{n} d(x_{n},C)$. Let $x_{k_n}\to y$.
By monotonicity again
\[
d(x_{k_n},C) \to \inf_{n} d(x_{k_n},C) = \text{const},
\] 
but continuity of the distance yields $d(x_{k_n},C)\to d(y,C)$.
Therefore we have (iii).
\end{proof}

\begin{proposition}\label{intersectomega}
If a closed bounded subinvariant set $C$ intersects the omega-limit
set, $C\cap \omega((x_{n}))\neq\emptyset$, then it contains that 
omega-limit set, $C\supset\omega((x_{n}))$. In particular,
the omega-limit set is the minimal invariant set, 
provided it is invariant.
\end{proposition}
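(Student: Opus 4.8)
The plan is to lean entirely on Lemma~\ref{monotonedist}(iii), which guarantees that the distance function $d(\,\cdot\,,C)$ is constant across the whole omega-limit set. The intersection hypothesis will force that constant to be zero, and then the closedness of $C$ converts ``distance zero'' into genuine membership.

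Concretely, I would fix a point $y_0\in C\cap\omega((x_n))$, which exists by assumption. Since $y_0\in C$ and $C$ is closed, we have $d(y_0,C)=0$. Lemma~\ref{monotonedist}(iii) states that $d(y,C)$ equals the same constant $\inf_n d(x_n,C)$ for \emph{every} $y\in\omega((x_n))$; evaluating at $y=y_0$ identifies this constant as $0$. Hence $d(y,C)=0$ for all $y\in\omega((x_n))$, and since $C$ is closed this is equivalent to $y\in C$. Therefore $\omega((x_n))\subset C$, which is the first assertion.

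For the ``in particular'' part I would first record that $\omega((x_n))$ is automatically closed, being a descending intersection of closed sets, and bounded, since by Lemma~\ref{monotonedist}(ii) the orbit and hence its closure are bounded. If we further assume it is invariant, it qualifies as a nonempty closed bounded subinvariant set and trivially meets itself. Applying the first assertion to an arbitrary closed bounded subinvariant set $C'$ that intersects $\omega((x_n))$ gives $\omega((x_n))\subset C'$; thus $\omega((x_n))$ is contained in every (sub)invariant set it meets, and in particular admits no proper nonempty closed bounded invariant subset, which is exactly the claimed minimality. I do not anticipate a genuine obstacle: the whole argument is driven by the single observation that the constant distance is forced to vanish, and the only point demanding attention is to verify that $\omega((x_n))$ itself carries the standing closedness and boundedness hypotheses before it may be treated as an invariant set in the required sense.
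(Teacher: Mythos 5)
Your proof is correct and follows essentially the same route as the paper: both arguments hinge on Lemma~\ref{monotonedist}(iii), using the intersection point $y_0$ to force the constant value of $d(\,\cdot\,,C)$ on $\omega((x_n))$ to be zero, and then closedness of $C$ to conclude $\omega((x_n))\subset C$. Your additional verification that $\omega((x_n))$ is itself closed and bounded, and the explicit derivation of minimality, merely spell out details the paper leaves implicit.
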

\begin{proof}
Let $y_0\in C\cap \omega((x_{n}))$. 
From Lemma \ref{monotonedist} we know that any
$y\in \omega((x_{n}))$ necessarily obeys $d(y,C)=d(y_0,C)=0$.
\end{proof}

Note that the set $C$ in the above proposition need not be bounded
(as careful analysis of the proof of Lemma \ref{monotonedist} (i), (iii)
shows). Although the omega-limit set need not be invariant 
there holds
\begin{proposition}[\cite{BarnsleyPrivate}]\label{superinvomega}
The omega-limit set is superinvariant, i.e.,
\[
\Phi(\omega((x_{n}))\,)\supset \omega((x_{n})).
\]
\end{proposition}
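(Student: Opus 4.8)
The plan is to establish the set inclusion $\omega((x_{n}))\subset\Phi(\omega((x_{n})))$ pointwise, working directly from the sequential characterization of the omega-limit set. Fix $y\in\omega((x_{n}))$. By that characterization there is a subsequence with $x_{k_{n}}\to y$ and $k_{n}\nearrow\infty$. Recalling that $\Phi(S)=\bigcup_{i=1}^{N}f_{i}(S)$, it suffices to exhibit one index $i\in\{1,\ldots,N\}$ and one point $z\in\omega((x_{n}))$ for which $f_{i}(z)=y$, since this places $y\in f_{i}(\omega((x_{n})))\subset\Phi(\omega((x_{n})))$.

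First I would peel off the last map applied along the chosen subsequence. Each orbit point with positive index factors as $x_{k_{n}}=f_{i_{k_{n}}}(x_{k_{n}-1})$, and the symbols $i_{k_{n}}$ range over the finite alphabet $\{1,\ldots,N\}$. Hence some value $i$ is attained for infinitely many $n$, and the pigeonhole principle lets me thin the subsequence so that $i_{k_{n}}=i$ is constant (I keep the notation $k_{n}$ for the thinned subsequence). This isolates a single map $f_{i}$ carrying the predecessors $x_{k_{n}-1}$ onto the points $x_{k_{n}}$ that approach $y$.

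Next I would produce the required preimage $z$. The predecessors $x_{k_{n}-1}$ lie in the orbit, which is bounded by Lemma~\ref{monotonedist}(ii); invoking precompactness of the orbit I extract a convergent sub-subsequence $x_{k_{n_{j}}-1}\to z$. Since $k_{n_{j}}-1\nearrow\infty$, the sequential characterization again gives $z\in\omega((x_{n}))$. Finally, because $f_{i}$ is nonexpansive and therefore continuous, passing to the limit in $x_{k_{n_{j}}}=f_{i}(x_{k_{n_{j}}-1})$ yields $f_{i}(z)=y$, completing the inclusion.

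The step I expect to be the crux is the extraction of a convergent subsequence of predecessors: boundedness alone does not force a cluster point in a general complete metric space, so this is exactly where precompactness of the orbit is used — the very hypothesis that, via the cited Lemma, renders $\omega((x_{n}))$ nonempty and compact in the first place. Everything else (the pigeonhole reduction to a fixed symbol and the continuity of nonexpansive maps) is routine once the predecessor limit $z$ has been secured. It is worth flagging that, by contrast, no analogous argument gives the reverse inclusion $\Phi(\omega((x_{n})))\subset\omega((x_{n}))$, which is precisely why the omega-limit set is only superinvariant and not invariant in general.
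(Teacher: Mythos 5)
Your argument is sound, but it proves the proposition under a hypothesis the paper avoids, and by the opposite route. You peel off the \emph{last} map: from $x_{k_n}\to y$ you pass to the predecessors $x_{k_n-1}$, fix the symbol by pigeonhole, extract a convergent subsequence of predecessors, and push the limit through $f_i$ to obtain $z\in\omega((x_n))$ with $f_i(z)=y$. As you flag, the extraction step is where precompactness is indispensable, and boundedness (all that Lemma~\ref{monotonedist}(ii) supplies) cannot replace it in a general complete metric space. The paper, however, never assumes precompactness: its blanket hypothesis is only that $\omega((x_n))$ is nonempty, and it stresses that its considerations do not rely on properness of $X$ --- exactly the condition that would upgrade bounded orbits to precompact ones. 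The paper's proof goes the other way: it peels off the \emph{next} map, fixing by pigeonhole a symbol $\sigma$ with $x_{k_{l_n}+1}=f_\sigma(x_{k_{l_n}})$ along a sub-subsequence, and lets nonexpansiveness carry the convergence forward, $f_\sigma(x_{k_{l_n}})\to f_\sigma(x_*)$; no compactness is needed, since convergence is transported through a continuous map rather than pulled back against it.

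The comparison cuts both ways. Your route costs a compactness hypothesis (harmless in the motivating Euclidean, or generally proper, setting; restrictive otherwise), but it delivers literally the stated inclusion: every $y\in\omega((x_n))$ is exhibited as $f_i(z)$ with $z\in\omega((x_n))$. The paper's route is free of compactness but, read literally, delivers a neighboring assertion: for each $x_*\in\omega((x_n))$ it produces an \emph{image} $f_\sigma(x_*)$ lying in $\omega((x_n))$ (its membership in $\Phi(\omega((x_n)))$ is then trivial from $x_*\in\omega((x_n))$), not a preimage of $x_*$. To convert that forward-invariance property into $\omega((x_n))\subset\Phi(\omega((x_n)))$ one still needs to know that every point of $\omega((x_n))$ arises as such a limit of successors, and that is precisely the predecessor extraction you perform, with its compactness cost. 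So, in the precompact case yours is the complete, direct argument for the statement as written, whereas the paper's argument is the one that survives without compactness; your closing remark that no analogous argument gives $\Phi(\omega((x_n)))\subset\omega((x_n))$ is correct and consistent with the paper.
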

\begin{proof}
Let $x_{*}\in \omega((x_{n}))$, $x_{k_n}\to x_{*}$.
Then there exists a symbol $\sigma\in\{1,{\ldots},N\}$
s.t. $x_{{k_n}+1}=f_{\sigma}(x_{k_n})$ for infinitely 
many $k_n$'s, say $k_{l_n}$. Hence
\[
x_{k_{l_n}+1}= f_{\sigma}(x_{k_{l_n}}) \to 
f_{\sigma}(x_{*}) \in\Phi(\omega((x_{n}))\,).
\]
\end{proof}

For good properties of orbits and omega-limit sets one may 
need the notion of a proper space. 
A metric space $(X,d)$ is called \textbf{proper} 
(cf. \cite{BarnsleyVinceChaos}) if every ball not equal
to the whole space has compact closure; 
the terminology is not standardized, e.g., Beer uses
the term `space with nice closed balls', 
see \cite{Beer} 5.1.8 p.142.
Necessarily any such space is complete and locally compact. 
Conversely, a locally compact space is proper after suitable 
remetrization (\cite{Beer} 5.1.12 p.143). 
The main advantage of $X$ being proper is that any 
bounded orbit is precompact and consequently admits
nonempty compact omega-limit set.
We would like to stress out that our further considerations 
do not rely on the properness.

\section{Main theorem}

Let $\Phi = (X;f_1,{\ldots},f_N)$ be a nonexpansive IFS 
on a complete space $X$. We constantly assume that 
$\Phi$ possesses a nonempty closed bounded subinvariant 
set. Hence by observations made in the previous Section
all orbits and omega-limit sets are warranted to be bounded.
This is not very restrictive hypothesis, because if the 
omega-limit set recovers an invariant set, then there must
be present at least one (sub)invariant set.

Suppose $(x_{n})_{n=0}^{\infty}$,
$(y_{n})_{n=0}^{\infty}$ are two orbits generated
by the driving sequences $(i_{n})_{n=1}^{\infty}$,
$(j_{n})_{n=1}^{\infty}$
and starting at $x_{0},y_{0}\in X$, respectively.

\begin{proposition}\label{disjunctiveomega1}
Let us assume that
\begin{enumerate}
\item[(D)] the driving sequences for orbits
$(x_{n})_{n=0}^{\infty}$ and $(y_{n})_{n=0}^{\infty}$
are disjunctive,
\item[(C)] there exists a sequence of symbols 
$(u_1,{\ldots},u_{l})\in\{1,{\ldots},N\}^{l}$ s.t.
the composition $f_{u_{l}}\circ{\ldots}\circ f_{u_{1}}$
is a Lipschitz contraction.  
\end{enumerate}
Then the omega-limit set does not depend on 
the initial point
\[\omega((x_n)) = \omega((y_n)).\]
\end{proposition}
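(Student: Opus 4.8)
The plan is to prove the two inclusions $\omega((x_n)) \subset \omega((y_n))$ and $\omega((y_n)) \subset \omega((x_n))$ separately; since both drivers are disjunctive the roles of the two orbits are interchangeable, so it suffices to establish the first, the second following by a verbatim symmetric argument. Throughout I write $g := f_{u_l}\circ{\ldots}\circ f_{u_1}$ for the contraction from (C), with Lipschitz constant $\lambda < 1$, and I fix a constant $D$ bounding all distances between points of the two orbits, which exists by Lemma \ref{monotonedist}(ii). Two elementary tools will carry the argument. First, a contraction estimate: for a finite word $w$ over $\{1,{\ldots},N\}$ let $f_w$ denote the associated composition; if $w$ contains the block $u = (u_1,{\ldots},u_l)$ at least $M$ times in non-overlapping occurrences, then factoring $f_w$ through these blocks and using that every $f_i$ is nonexpansive shows $f_w$ is $\lambda^{M}$-Lipschitz. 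Second, a combinatorial fact about disjunctive sequences: every finite word occurs in such a sequence \emph{infinitely often} (if the last occurrence of a word $\sigma$ ended at position $P$, disjunctivity applied to $\tau\sigma$ with $|\tau| = P$ would force an occurrence of $\sigma$ starting beyond $P$, a contradiction), and consequently, since the block $u^{M}$ occurs for every $M$, every sufficiently long prefix of a disjunctive driver contains at least $M$ non-overlapping copies of $u$.

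For the inclusion, fix $x_{*}\in\omega((x_n))$ and a subsequence $x_{k_n}\to x_{*}$, and let $\varepsilon > 0$. Writing $w_n := (i_1,{\ldots},i_{k_n})$ for the prefix of the $x$-driver, we have $f_{w_n}(x_0) = x_{k_n}$. Choosing $M$ with $\lambda^{M}D < \varepsilon/2$, the combinatorial fact applied to the disjunctive driver $(i_n)$ guarantees that for all large $n$ the prefix $w_n$ contains at least $M$ non-overlapping copies of $u$, so $f_{w_n}$ is $\lambda^{M}$-Lipschitz; enlarging $n$ further I also arrange $d(x_{k_n},x_{*}) < \varepsilon/2$. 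Now I exploit the disjunctivity of the \emph{other} driver $(j_n)$: the word $w_n$ occurs in $(j_n)$ infinitely often, so I may pick an occurrence $w_n = (j_{q+1},{\ldots},j_{q+|w_n|})$ at a position $q$ beyond any prescribed threshold. Then $y_{q+|w_n|} = f_{w_n}(y_q)$, and the contraction estimate together with $d(x_0,y_q)\le D$ yields $d(y_{q+|w_n|},x_{k_n}) = d(f_{w_n}(y_q),f_{w_n}(x_0)) \le \lambda^{M}D < \varepsilon/2$, whence $d(y_{q+|w_n|},x_{*}) < \varepsilon$.

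Thus for every $\varepsilon > 0$ the $y$-orbit visits the $\varepsilon$-ball around $x_{*}$ at arbitrarily large indices; running this with $\varepsilon = 1/m$ and choosing the occurrences at strictly increasing positions produces a subsequence of $(y_n)$ converging to $x_{*}$, so $x_{*}\in\omega((y_n))$ by the sequential characterization. I expect the main obstacle to be precisely the mismatch of the two drivers: one cannot compare $x_n$ and $y_n$ index-by-index as in the single-driver case. The device that overcomes this is to \emph{replay} a long $x$-prefix inside the $y$-driver (legitimate because $(j_n)$ is disjunctive) while ensuring that prefix is so rich in copies of the contracting block $u$ that the difference between the true starting point $x_0$ and the surrogate starting point $y_q$ is crushed below $\varepsilon$; checking that disjunctivity supplies both the needed occurrences and the needed frequency of $u$ is the crux.
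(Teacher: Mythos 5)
Your proof is correct, but it is organized differently from the paper's. The paper's own proof of this proposition is a two-step reduction: it treats in place only the special case where both orbits share the same driver---there the estimate $d(x_{k_{l_n}},y_{k_{l_n}})\le L^{m_n}\,d(x_0,y_0)\to 0$ needs nothing beyond nonexpansiveness and disjunctivity of the common driver---and defers the general case of different drivers to the forward-referenced Lemma~\ref{disjunctiveomega}, which handles distinct disjunctive drivers but identical starting points under the weaker hypothesis (CO); chaining the two cases through an intermediate orbit gives the proposition. Your argument is instead a single self-contained proof, and its central device---replaying, by disjunctivity of $(j_n)$, an arbitrarily late occurrence of a long prefix $w_n$ of the $x$-driver that is rich in copies of $u$, then crushing $d(f_{w_n}(y_q),f_{w_n}(x_0))\le\lambda^{M}D$---is precisely the device the paper uses to prove Lemma~\ref{disjunctiveomega}. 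What lets you dispense with the paper's decomposition is the strength of (C): a globally Lipschitz contraction can be applied to the pair $(x_0,y_q)$ for arbitrary $q$, which is exactly what fails under (CO), where contractivity is guaranteed only on the orbit tree rooted at $x_0$, forcing the paper's lemma to assume $y_0=x_0$. The price you pay is the uniform bound $D$ on inter-orbit distances, correctly justified by Lemma~\ref{monotonedist}(ii) together with the standing assumption of a bounded subinvariant set (the paper's same-driver step needs only $d(x_0,y_0)<\infty$). Both routes are sound: the paper's factorization earns the weaker hypothesis (CO) that is needed later for the main theorem, while yours earns a direct proof of the proposition as stated, with all combinatorial facts about disjunctive sequences (infinitely many occurrences of every word, hence arbitrarily many disjoint copies of $u$ in long prefixes) proved rather than implicit.
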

\begin{proof}
We additionally assume that 
the driving sequences are the same for both
orbits, $i_{n}=j_{n}$. The general case 
will follow from Lemma~\ref{disjunctiveomega}
below.

Denote by $L<1$ the Lipschitz constant of 
$f_{u_{l}}\circ{\ldots}\circ f_{u_{1}}$. 
By disjunctivity of the driver $(i_{n})_{n=1}^{\infty}$,
given any subsequence $k_{n}$ there exists a deeper
subsequence $k_{l_n}$ s.t. 
$(i_{k_{l_n}}, i_ {k_{l_n}-1}, {\ldots}, i_{1})$ contains
as a subword the sequence 
$(u_{l},{\ldots},u_{1},{\ldots}, u_{l},{\ldots},u_{1})
\in \{1,{\ldots},N\}^{m_{n}{\cdot} l}$, i.e., 
$(u_{l},{\ldots},u_{1})$ repeated $m_{n}$-times,
and additionally $m_{n}\nearrow\infty$.
Therefore
\[
d(x_{k_{l_n}},y_{k_{l_n}}) \leq L^{m_n}\cdot 
d(x_0,y_0)\to 0.
\]
This means that both orbits admit the same limit 
points.
\end{proof}

We are going to strengthen this result, 
by showing under contractivity condition substantially 
weaker than (C), that the omega-limit sets of orbits 
starting at the same point are identical provided 
the sequences driving these orbits are disjunctive.

\begin{lemma}\label{disjunctiveomega}
Let us assume about orbits 
$(x_{n})_{n=0}^{\infty}$,
$(y_{n})_{n=0}^{\infty}$
that they start from the same point $y_0=x_0$,
and obey (D) and
\begin{enumerate}
\item[(CO)] there exists a sequence of symbols 
$(u_1,{\ldots},u_{l})\in\{1,{\ldots},N\}^{l}$ s.t.
the composition $f_{u_{l}}\circ{\ldots}\circ f_{u_{1}}$
is a Lipschitz contraction when restricted to the set
\begin{gather*}
\bigcup_{n=0}^{\infty} \Phi^{n}(\{x_0\}) = 
\{x_0, f_1(x_0), {\ldots}, f_{N}(x_0),  \\
f_{1}\circ f_{1}(x_0), f_{1}\circ f_{2}(x_0), {\ldots}, 
f_{1}\circ f_{N}(x_0), {\ldots},  \\
 f_{N}\circ f_{1}(x_0), {\ldots}, f_{N}\circ f_{N}(x_0), {\ldots}\}
\end{gather*}
(a branching tree with root at $x_0$). 
\end{enumerate}
Then the omega-limit sets coincide
\[\omega((x_n)) = \omega((y_n)).\]
\end{lemma}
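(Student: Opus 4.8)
The plan is to single out one closed set $B$, determined by $x_0$ alone, and to prove that \emph{each} of the two omega-limit sets equals $B$. First I would record that the branching tree $T:=\bigcup_{n\ge 0}\Phi^{n}(\{x_0\})$ is bounded: every point of $T$ lies on some orbit issued from $x_0$, hence sits in the bounded set $N_{2d(x_0,C)}C\cup\{x_0\}$ by the proof of Lemma~\ref{monotonedist}(ii). Writing $g:=f_{u_l}\circ\cdots\circ f_{u_1}$ for the map from (CO), with Lipschitz constant $L<1$ on $T$, one checks $g(T)\subset T$, so $g$ extends to an $L$-Lipschitz self-map of the complete bounded set $\overline{T}$ and, by the Banach principle, has a unique fixed point $p\in\overline{T}$ satisfying $d(g^{m}(z),p)\le L^{m}\operatorname{diam}(\overline{T})$ for every $z\in T$. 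The point $p$ depends only on $x_0$ and $g$, not on the driver, so it is common to both orbits. I then put $B:=\overline{\{\,f_v(p): v \text{ a finite (possibly empty) word over }\{1,\dots,N\}\,\}}$, where $f_v:=f_{v_r}\circ\cdots\circ f_{v_1}$ for $v=(v_1,\dots,v_r)$ and $f_\emptyset=\mathrm{id}$; in particular $p\in B$, and $B$ again depends only on $x_0$.

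For the inclusion $B\subset\omega((x_n))$ I would invoke disjunctivity directly. Given a word $v$ and an integer $M$, the finite word consisting of $(u_1,\dots,u_l)$ repeated $M$ times followed by $v$ occurs as a consecutive block of the disjunctive driver $(i_n)$; starting from the orbit point $x_q$ at that block's left end, the resulting orbit point equals $f_v(g^{M}(x_q))$, so that $d\big(x_{e_M},f_v(p)\big)\le L^{M}\operatorname{diam}(\overline{T})$, where $e_M$ is the block's right endpoint. Since that block has length at least $Ml$, it ends at an index $e_M\ge Ml\to\infty$; hence $x_{e_M}\to f_v(p)$ and $f_v(p)\in\omega((x_n))$. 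As $\omega((x_n))$ is closed, $B\subset\omega((x_n))$.

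The reverse inclusion $\omega((x_n))\subset B$ is the hard one, and the single genuine obstacle is that the $f_i$ are only nonexpansive: a prefix $f_{i_K}\circ\cdots\circ f_{i_1}$ never forgets $x_0$, so I cannot merely replace the starting point by $p$. The remedy is to relocate a long contraction block near the tail of each prefix. Fix $x_*\in\omega((x_n))$ with $x_{k_n}\to x_*$, and let $M_n$ be the largest number of consecutive copies of $(u_1,\dots,u_l)$ occurring inside positions $1,\dots,k_n$; since every fixed power of $(u_1,\dots,u_l)$ appears at a finite position and $k_n\to\infty$, we get $M_n\to\infty$. Choosing such a block, ending at or before $k_n$ and beginning right after an orbit point $x_{c_n}\in T$, I factor $x_{k_n}=f_{\tau_n}\big(g^{M_n}(x_{c_n})\big)$ for the tail word $\tau_n$, whence $d\big(x_{k_n},f_{\tau_n}(p)\big)\le L^{M_n}\operatorname{diam}(\overline{T})\to 0$. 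As $f_{\tau_n}(p)\in B$, this gives $d(x_{k_n},B)\to 0$, and since $x_{k_n}\to x_*$ with $B$ closed we conclude $x_*\in B$; thus $\omega((x_n))=B$. Finally, the $y$-orbit starts at the same $x_0$, hence shares the very same tree $T$, map $g$, fixed point $p$ and set $B$, so the same two arguments yield $\omega((y_n))=B$ and therefore $\omega((x_n))=\omega((y_n))$.
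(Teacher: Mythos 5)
Your proof is correct, and it takes a genuinely different route from the paper's. The paper argues by direct orbit-to-orbit comparison: given $x_{k_n}\to x_*$, it passes to a subsequence $k_{l_n}$ whose driver prefix $(i_1,\ldots,i_{k_{l_n}})$ contains $m_n\nearrow\infty$ copies of $(u_1,\ldots,u_l)$, then uses disjunctivity of $(j_n)$ to locate positions $r_n$ at which this \emph{entire prefix} occurs as a consecutive block of the second driver, so that $y_{r_n}$ arises from $y_{r_n-k_{l_n}}$ by the same composition that produces $x_{k_{l_n}}$ from $x_0$; nonexpansiveness plus (CO) (applicable because both orbits stay in the tree rooted at $x_0=y_0$) give $d(x_{k_{l_n}},y_{r_n})\le L^{m_n}\, d(x_0,y_{r_n-k_{l_n}})\to 0$, hence $x_*\in\omega((y_n))$, and equality follows by symmetry. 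You instead build a canonical object: the Banach fixed point $p$ of $g=f_{u_l}\circ\cdots\circ f_{u_1}$ on the complete bounded set $\overline{T}$, and the closed tree $B=\overline{\{f_v(p): v \text{ a finite word}\}}$, and you identify \emph{both} omega-limit sets with $B$ (planting a block $u^M v$ in the driver for $B\subset\omega$, extracting the longest block $u^{M_n}$ from each prefix for $\omega\subset B$); both inclusions are sound, and the boundedness of $T$ you need is exactly what the standing assumption of Section 3 supplies, since $d(t,C)\le d(x_0,C)$ for every $t\in T$ by the argument of Lemma \ref{monotonedist} and $C$ is bounded. Your route is longer but buys more: it exhibits the common omega-limit set explicitly; it proves it is nonempty (the paper must assume nonemptiness of omega-limit sets throughout); and, since $f_{\sigma}\circ f_{v}=f_{v\sigma}$, your $B$ is visibly subinvariant, so combined with Propositions \ref{intersectomega} and \ref{superinvomega} your argument already delivers the invariance and minimality conclusions of Theorem \ref{OmegaThm} without the driver-modification trick used in its proof. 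The paper's route buys brevity: it never needs the fixed point or the tree of $p$, only the contraction estimate between the two orbits.
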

\begin{proof}
Denote by $L$ the Lipschitz constant of 
$f_{u_{l}}\circ{\ldots}\circ f_{u_{1}}$.
Fix $x_{*}\in \omega((x_{n}))$, $x_{k_{n}}\to x_{*}$.
Similarly as in the proof of Proposition \ref{disjunctiveomega1}
there exists $k_{l_n}$ s.t. 
$(i_{k_{l_n}}, {\ldots}, i_{1})$ contains
$(u_{l},{\ldots},u_{1})$ repeated 
consecutively $m_{n}$-times, $m_{n}\nearrow\infty$.
Then, due to disjunctivity of $j_{n}$, we can find
a subsequence $j_{r_n}$ s.t. 
\begin{gather*}
f_{j_{r_n}}= f_{i_{k_{l_n}}}, \\
f_{j_{(r_{n}-1)}}= f_{i_{(k_{l_n}-1)}},\\
{\ldots},
f_{j_{(r_{n}-(k_{l_n}-1))}} = f_{i_1}.\\
\end{gather*}
Hence we arrive at
\[
d(x_{k_{l_n}},y_{r_n})\leq L^{m_n}\cdot 
d(x_0,y_{{r_n}-{k_{l_n}}}) \to 0.
\]
Therefore $x_{*}\in\omega((y_n))$.
\end{proof}

Now we establish the main result of the whole article.

\begin{theorem}\label{OmegaThm}
If the orbit $(x_{n})_{n=0}^{\infty}$ of the 
nonexpansive IFS $(X;f_1,{\ldots},f_N)$ is bounded
and driven by a disjunctive sequence of symbols and
the system has the property (CO) of 
contractivity on orbits, then $\omega((x_{n}))$ 
is a minimal closed invariant set. Under stronger condition
(C) of contractivity, any two omega-limit sets generated
by a disjunctive choice of maps coincide.
\end{theorem}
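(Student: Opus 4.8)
The plan is to treat the two assertions separately, reducing each to the machinery assembled in the previous section. For the first assertion I would show that $\omega((x_n))$ is simultaneously superinvariant and subinvariant, hence invariant, and then invoke Proposition~\ref{intersectomega} for minimality. Superinvariance, $\Phi(\omega((x_n)))\supset\omega((x_n))$, is exactly Proposition~\ref{superinvomega}, so the entire content of the first part lies in the reverse inclusion $\Phi(\omega((x_n)))\subset\omega((x_n))$. Before starting I would record three facts: the orbit tree $T:=\bigcup_{n\ge 0}\Phi^{n}(\{x_0\})$ is bounded (for any finite word $w$ one has $d(f_w(x_0),C)\le d(x_0,C)$ by nonexpansiveness and iterated subinvariance of $C$, so $T$ lies in a bounded neighbourhood of $C$); every orbit point, and $x_0$ itself, belongs to $T$; and $g:=f_{u_l}\circ\cdots\circ f_{u_1}$ maps $T$ into $T$ and is $L$-Lipschitz there with $L<1$ by hypothesis (CO).

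The heart of the argument is the subinvariance step: given $x_*\in\omega((x_n))$, a subsequence $x_{k_n}\to x_*$ and a symbol $\sigma$, I would produce genuine orbit points converging to $f_\sigma(x_*)$. Write $P_n=(i_1,\dots,i_{k_n})$, so that $f_{P_n}(x_0)=x_{k_n}$ and $f_\sigma(x_{k_n})\to f_\sigma(x_*)$. Since the driver is disjunctive, the consecutive block $(u_1,\dots,u_l)$ repeated $M$ times occurs in it for every $M$; hence the prefix $P_n$ contains at least $m_n$ disjoint occurrences of $(u_1,\dots,u_l)$ with $m_n\to\infty$ (this is automatic once $k_n$ is large). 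Consequently $f_{P_n}$ factors, up to interspersed nonexpansive maps, through $g$ taken $m_n$ times, so that $f_{P_n}$ is $L^{m_n}$-Lipschitz \emph{on} $T$. Applying disjunctivity once more to the finite word $(i_1,\dots,i_{k_n},\sigma)$, it reappears at arbitrarily large positions $s_n$, and there the orbit satisfies $x_{s_n+k_n}=f_\sigma\big(f_{P_n}(x_{s_n-1})\big)$. Because $x_{s_n-1},x_0\in T$,
\[
d\big(x_{s_n+k_n},f_\sigma(x_*)\big)\le d\big(f_{P_n}(x_{s_n-1}),f_{P_n}(x_0)\big)+d\big(f_\sigma(x_{k_n}),f_\sigma(x_*)\big)\le L^{m_n}\operatorname{diam}T+d(x_{k_n},x_*)\to 0.
\]
Choosing $s_n\to\infty$ makes $(x_{s_n+k_n})$ a genuine tail subsequence, so $f_\sigma(x_*)\in\omega((x_n))$; as $x_*$ and $\sigma$ were arbitrary this gives $\Phi(\omega((x_n)))\subset\omega((x_n))$. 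Combined with Proposition~\ref{superinvomega}, the set $\omega((x_n))$ is invariant; it is nonempty by our standing convention, closed as an intersection of closed sets, and bounded by Lemma~\ref{monotonedist}(ii). Proposition~\ref{intersectomega} then upgrades it to a minimal closed invariant set.

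For the second assertion the hypothesis is the stronger global contractivity (C), and here I would merely assemble the coincidence statements already proved: Proposition~\ref{disjunctiveomega1} handles orbits with possibly different starting points, while Lemma~\ref{disjunctiveomega} removes the requirement that the drivers coincide. Combining them shows that any two orbits driven by disjunctive sequences share the same omega-limit set, which is precisely the claim.

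I expect the subinvariance step to be the main obstacle. Its delicate point is that the relocated block $(i_1,\dots,i_{k_n},\sigma)$ no longer acts on $x_0$ but on the unknown orbit point $x_{s_n-1}$; the estimate survives only because both inputs lie in the tree $T$ and $f_{P_n}$ has been arranged to be $L^{m_n}$-contractive precisely there, so that the discrepancy $d(f_{P_n}(x_{s_n-1}),f_{P_n}(x_0))$ is annihilated. This is also exactly where the weaker hypothesis (CO) — contractivity merely on the tree rather than on all of $X$ — suffices, since $f_{P_n}$ is only ever evaluated at tree points.
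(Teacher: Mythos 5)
Your proposal is correct, and its central step---the subinvariance of $\omega((x_n))$---takes a genuinely different route from the paper's. The paper proves subinvariance by surgery on the driver: it selects sparse positions $k_{l_n}$, spaced so that between consecutive ones all words of length $n$ and $m_n$ consecutive copies of $(u_1,\ldots,u_l)$ occur, overwrites the symbol following each such position with $\sigma$, checks that the modified driver is still disjunctive, lets (CO) force the auxiliary orbit $(y_n)$ started at $x_0$ to shadow $(x_n)$ at those positions---so that $f_\sigma(x_*)\in\omega((y_n))$---and finally invokes Lemma~\ref{disjunctiveomega} to identify $\omega((y_n))$ with $\omega((x_n))$. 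You never leave the given orbit: you replay the word $(i_1,\ldots,i_{k_n},\sigma)$ at a later position $s_n$ of the \emph{same} driver, and the $L^{m_n}$-contractivity of $f_{P_n}$ on the orbit tree $T$ wipes out the dependence on the unknown orbit point $x_{s_n-1}$, sending $x_{s_n+k_n}\to f_\sigma(x_*)$ directly. (Your appeal to occurrences ``at arbitrarily large positions'' is not even needed as a separate fact: any occurrence of this word ends at index $s_n+k_n\geq k_n+1\to\infty$, so the terminal indices escape to infinity automatically.) In effect you have inlined the replay-plus-contraction mechanism that the paper isolates as Lemma~\ref{disjunctiveomega}; the estimate there, $d(x_{k_{l_n}},y_{r_n})\leq L^{m_n}\, d(x_0,y_{r_n-k_{l_n}})$, is exactly your estimate performed across two orbits instead of within one. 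What your route buys: the first assertion of the theorem becomes self-contained, with Lemma~\ref{disjunctiveomega} needed only for the coincidence statement under (C), and it makes explicit why contractivity merely on the tree suffices, since $f_{P_n}$ is only ever evaluated at tree points, which your boundedness remark keeps under control via the subinvariant set $C$. What the paper's route buys is modularity: one orbit-comparison lemma serves both halves of the theorem. Your handling of the second assertion---chaining the same-driver case of Proposition~\ref{disjunctiveomega1} with Lemma~\ref{disjunctiveomega} through an intermediate orbit---is precisely the paper's.
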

\begin{proof}
By Proposition \ref{intersectomega} (or by Proposition
\ref{superinvomega} if one prefers) we only need to 
prove subinvariance:
\[
f_{\sigma}(\omega((x_{n}))\,) \subset \omega((x_{n}))
\;\text{ for every }\; \sigma\in\{1,{\ldots},N\}.
\]
Let $f_{\sigma}(x_{*})\in f_{\sigma}(\omega((x_{n}))\,)$,
$x_{k_n}\to x_{*}$. Take a finer subsequence 
$k_{l_n}$  according to the following rules: the sequence
$(i_{k_{(l_{n-1})}+2},{\ldots},i_{k_{l_n}})$ contains 
\begin{enumerate}
\item[(a)] all finite words of length $n$ 
build over the alphabet $\{1,{\ldots},N\}$,
\item[(b)] $m_{n}$-times repeated 
word $(u_1,{\ldots},u_{l})$
appearing in the condition (CO), $m_{n}\nearrow\infty$. 
\end{enumerate}
Redefine $i_{n}$ in such a way that 
$\tilde{i}_{k_{l_n}+1} = \sigma$ and 
$\tilde{i}_{n}= i_{n}$ otherwise. The sequence 
$\tilde{i}_{n}$ is disjunctive due to (a). Hence
the orbit $y_{n}$ starting at $y_0:= x_0$ with
the driver $j_{n} := \tilde{i}_{n}$ has the property
that $d(y_{k_{l_n}},x_{k_{l_n}}) \to 0$ as warranted
by (b). So 
\[
y_{k_{l_n}+1} = f_{\sigma}(y_{k_{l_n}}) \to 
f_{\sigma}(x_{*})\in \omega((y_{n})).
\]
Therefore 
\[
\omega((x_{n})) = \omega((y_{n})) \ni f_{\sigma}(x_{*})
\]
via Lemma \ref{disjunctiveomega}.
\end{proof}

\begin{example}
Consider a system $(X; f_{i}, i=1,{\ldots},N)$ comprising 
of orthogonal projections $f_{i}:= P_{i}$ onto affine 
subspaces $H_{i}\subset X$ in the Euclidean space $X$.
Due to \cite{Meshulam} we know that any orbit produced
by projections is bounded. 
Therefore in the standard situation we do not need to 
assume that there exists a bounded subinvariant set.
Moreover Theorem \ref{OmegaThm} warrants the 
existence of a bounded invariant set.
 
The most important fact about orthogonal 
projections is that the composition 
$P_{N}{\circ}{\ldots}{\circ}P_{1}$ 
is contractive on orbits, obeys condition (CO). 
Several results in this direction have been
obtained throughout the years: 
\cite{Kosmol, BauschkeAs, Kirchheim}.

This settles the case of the Kaczmarz algorithm 
when projecting onto multiple hyperplanes with 
empty intersection (cf. \cite{Angelos}).
\end{example}


\begin{thebibliography}{99}%

\bibitem {AkcogluKrengel} M. A. Akcoglu, U. Krengel:
\textit{Nonlinear models of diffusion on a finite space}.
Probab. Th. Rel. Fields \textbf{76} (1987), 411--420.

\bibitem {Akin} E. Akin: \textit{The General Topology of Dynamical 
Systems}. American Mathematical Society 1993.

\bibitem {Angelos}J. Angelos, G. Grossman, E. Kaufman, T. Lenker, 
L. Rakesh: \textit{Limit cycles for successive projections onto 
hyperplanes in $R^n$}. Linear Algebra Appl. \textbf{285} (1998), 
201--228.

\bibitem {FractalsEver}M. F. Barnsley: \textit{Fractals Everywhere}. 
Academic Press 1988.

\bibitem {BarnsleyPrivate}M. F. Barnsley: \textit{Chaos game orbits and 
inverse piecewise linear systems. Experiments, conjectures and proofs.}
Private communication, 2012.

\bibitem {BarnsleyLesniak}M. F. Barnsley, K. Le\'{s}niak: 
\textit{The chaos game on a general iterated function system 
from a topological point of view}. arXiv:1203.0481 [math.GN].

\bibitem {BarnsleyVinceChaos}M. F. Barnsley, A. Vince: 
\textit{The chaos game on a general iterated function system}. 
Ergodic Theory Dynam. Systems 
\textbf{31} no.4 (2011), 1073--1079.

\bibitem {BarnsleyVinceProjective}M. F. Barnsley, A. Vince: 
\textit{Real projective iterated function systems}. J. Geom. Anal. 
\textbf{22} no.4 (2012), 1137--1172.

\bibitem {BauschkeThesis}H. H. Bauschke: \textit{Projection Algorithms 
and Monotone Operators}. PhD thesis, Simon Fraser University 1996.

\bibitem {BauschkeAs}H. H. Bauschke: \textit{The composition of 
projections onto closed convex sets in Hilbert space is asymptotically 
regular}. Proc. Amer. Math. Soc. \textbf{131} no.1 (2003), 141--146. 

\bibitem {BauschkeBorwein}H.H. Bauschke, J. M. Borwein: 
\textit{On projection algorithms for solving convex feasibility problems}. 
SIAM Review \textbf{38} no.3 (1996), 367--426

\bibitem {BauschkeDeutsch}H. H. Bauschke, F. Deutsch, H. Hundal,
S.-H. Park: \textit{Accelerating the convergence
of the method of alternating projections}. Transac. Amer. Math. Soc.
\textbf{355} no. 9 (2003), 3433--3461.

\bibitem {BauschkeReich}H. H. Bauschke, E. Matou\v{s}kov\'{a},
S. Reich: \textit{Projection and proximal point methods:
convergence results and counterexamples}. Nonlinear Analysis 
\textbf{56} (2004), 715--738.

\bibitem {Beer}G. Beer: \textit{Topologies on Closed and Closed Convex 
Sets}. Kluwer, Dordrecht 1993.

\bibitem {CaludeStaiger}C. S. Calude, L. Staiger: \textit{Generalisations of
disjunctive sequences}. MLQ Math. Log. Q. \textbf{51} no.2 (2005), 120--128.

\bibitem {DafermosSlemrod}C. M. Dafermos, M. Slemrod:
\textit{Asymptotic Behavior of Nonlinear Contraction Semigroups}.
J. Functional Analysis \textbf{13} (1973), 97--106.

\bibitem {DiLena}G. DiLena, B. Messano, D. Roux: \textit{Rigid sets and
nonexpansive maps}. Proc. Amer. Math. Soc. \textbf{125} no.12 (1997),
3575--3580.

\bibitem {Edgar}G.A. Edgar: \textit{Integral, Probability, and Fractal Measures}.
Springer 1998.

\bibitem {AlternatingProjection}R. Escalante, M. Raydan: \textit{Alternating
Projection Methods}. SIAM 2011.

\bibitem {Galantai}A. Gal\'{a}ntai: \textit{Projectors and Projection Methods}.
Kluwer 2004.

\bibitem {GoebelKirk}K. Goebel, W. A. Kirk: \textit{Topics in Metric
Fixed Point Theory}. Cambridge University Press 1990.

\bibitem {Igudesman}K. B. Igudesman: \textit{Dynamics of finite-multivalued 
transformations}. Lobachevskii J. Math. \textbf{17} (2005),  47--60.

\bibitem {Kieninger}B. Kieninger: \textit{Iterated Function Systems on 
Compact Hausdorff Spaces}. Shaker-Verlag, Aachen 2002.

\bibitem {Kirchheim}B. Kirchheim, E. Kopeck\'{a}, S. M\"{u}ller: 
\textit{Monotone curves}. Math. Ann. \textbf{351} no.1 (2011), 81--94.

\bibitem {Potzsche}P. E. Kloeden, C. P\"{o}tzsche, M. Rasmussen:
\textit{Discrete-time nonautonomous dynamical systems}.
In R. Johnson, M.P. Pera (eds.), 
Stability and Bifurcation Theory for Non-Autonomous Differential Equations, 
Springer 2012, pp.35--102.

\bibitem {Kosmol}P. Kosmol: \textit{Projection methods for linear optimization}. 
Izv. Nats. Akad. Nauk Armenii Mat. \textbf{36} (2001), 49--56.

\bibitem {nonautonomouSystems}W. Krabs, S. Pickl:
\textit{Dynamical Systems. Stability, Controllability and Chaotic Behavior}. 
Springer 2010.

\bibitem {LaTorre}H. Kunze, D. LaTorre, F. Mendivil, E.R. Vrscay:
\textit{Fractal-Based Methods in Analysis}. Springer 2012.

\bibitem {LasotaMyjak} A. Lasota, J. Myjak: \textit{Attractors of 
multifunctions}. Bull. Pol. Ac. Sci. Math. \textbf{48} (2000) 319--334.

\bibitem {LesniakCEJM}K. Le\'{s}niak: \textit{Invariant sets and 
Knaster-Tarski principle}, Central European Journal of Mathematics 
\textbf{10} no.6 (2012), 2077--2087. 

\bibitem {Massopust}P. Massopust: \textit{Interpolation and Approximation
with Splines and Fractals}. Oxford University Press 2010.

\bibitem {MauldinUrbanskiGraph}R. D. Mauldin, M. Urba\'{n}ski: \textit{Graph
Directed Markov Systems: Geometry and Dynamics of Limit Sets}. Cambridge
University Press 2003.

\bibitem {McFarlaneHoggar} I. McFarlane, S. G. Hoggar: \textit{Optimal 
drivers for the 'random' iteration algorithm}. Comput. J. \textbf{37} no. 7 
(1994), 629--640.

\bibitem {McGehee}R. McGehee: \textit{Attractors for closed relations on
compact Hausdorff spaces}. Indiana Univ. Math. J. \textbf{41} (1992), 1165--1209.

\bibitem {Meshulam} R. Meshulam: \textit{On products of projections}. 
Discrete Math. \textbf{154} (1996), 307--310.

\bibitem {Nussbaum}R. D. Nussbaum: \textit{Omega limit sets
of nonexpansive maps: finiteness and cardinality estimates}. Diff. Integral Eq.
\textbf{3} no. 3 (1990), 523--540.

\bibitem {RoehrigSine}S. F. Roehrig, R. C. Sine: \textit{The structure of 
$\omega$-limit sets of nonexpansive maps}. Proc. Amer. Math Soc. 
\textbf{81} no. 3 (1981), 398--400.

\bibitem {Sine}R. C. Sine: \textit{Constricted systems}. Rocky Mountaint 
J. Math. \textbf{21} no. 4 (1991), 1373--1383.

\bibitem {Wicks}K. R. Wicks: \textit{Fractals and Hyperspaces}. Springer 1991.

\end{thebibliography}
\end{document}